\crefname{lemma}{Lemma}{Lemmas}
\crefname{corollary}{Corollary}{Corollaries}
\crefname{theorem}{Theorem}{Theorems}
\crefname{equation}{Equation}{Equations}
\crefname{example}{Example}{Examples}
\crefname{section}{Section}{Sections}
\crefname{subsection}{Section}{Sections}
\newcommand\rest[1][]{\EM{\upharpoonleft_{#1}}}
\newcommand{\defn}[1]{{\bf{#1}}}
\def\Powerset{\mathcal{P}}
\newcommand{\Erdos}{\text{Erd\H{o}s}}
\DeclareMathOperator{\Rel}{Rel}
\DeclareMathOperator{\arity}{ar}
\DeclareDocumentCommand{\SeqSize}{d[] d()}
{
\EM{#1^{(#2)}}
}
\DeclareMathOperator{\SubStrop}{Sub}
\DeclareDocumentCommand{\SubStr}{d[] d()}
{
\IfNoValueTF{#1}
	{
        \EM{\SubStrop(#2)}
	}
	{
        \EM{\SubStrop_{#1}(#2)}
	}
}
\DeclareMathOperator{\SFop}{SF}
\DeclareDocumentCommand{\SF}{d[]}
{
\IfNoValueTF{#1}
	{
	\EM{\SFop}
	}
	{
	\EM{\SFop_{#1}}
	}
}
\DeclareDocumentCommand{\dual}{d()}
{
\IfNoValueTF{#1}
	{
	\EM{\hat{\ }}
	}
	{
	\EM{\hat{#1}}
	}
}
\DeclareDocumentCommand{\SizeAS}{d()}
{
\IfNoValueTF{#1}
	{
	\EM{|\cdot|}
	}
	{
	\EM{|#1|}
	}
}
\DeclareDocumentCommand{\compcK}{d[]}
{
\IfNoValueTF{#1}
	{
	\EM{\mathbb{K}}
	}
	{
	\EM{\mathbb{K}[#1]}
	}
}
\DeclareDocumentCommand{\AgeK}{d[]}
{
\IfNoValueTF{#1}
	{
	\EM{\mathbf{K}}
	}
	{
	\EM{\mathbf{K}[#1]}
	}
}
\DeclareDocumentCommand{\Rel}{d[]}
{
\IfNoValueTF{#1}
	{
	\EM{\mathcal{R}}
	}
	{
	\EM{\mathcal{R}_{#1}}
	}
}
\DeclareDocumentCommand{\Func}{d[]}
{
\IfNoValueTF{#1}
	{
	\EM{\mathcal{F}}
	}
	{
	\EM{\mathcal{F}_{#1}}
	}
}
\DeclareDocumentCommand{\ar}{d[]}
{
\IfNoValueTF{#1}
	{
	\EM{\arity}
	}
	{
	\EM{\arity_{#1}}
	}
}
\DeclareDocumentCommand{\Fn}{d<> d[] d()}
{
\IfNoValueTF{#3}
	{
	\EM{\textrm{Fn}(#1, #2)}
	}
	{
	\EM{\textrm{Fn}(#1, #2, #3)}
	}
}
\DeclareMathOperator{\Sym}{Sym}
\DeclareDocumentCommand{\Perm}{d()}
{
\EM{\Sym(#1)}
}
\DeclareDocumentCommand{\tdcl}{d[] d()}
{
\IfNoValueTF{#1}
{
    \EM{\textbf{term}(#2)}
}
{
    \EM{\textbf{term}_{#1}(#2)}
}
}
\DeclareDocumentCommand{\gdcl}{d[] d()}
{
\IfNoValueTF{#1}
{
    \EM{\textrm{gcl}(#2)}
}
{
    \EM{\textrm{gcl}_{#1}(#2)}
}
}
\DeclareDocumentCommand{\qftp}{d[] d()}
{
\IfNoValueTF{#1}
{
    \EM{\quantfreetp(#2)}
}
{
    \EM{\quantfreetp_{#1}(#2)}
}
}
\DeclareMathOperator{\quantfreetp}{qtp}
\DeclareDocumentCommand{\Closure}{d<> d()}
{
\IfNoValueTF{#1}
{
    \EM{\textrm{cl}(#2)}
}
{
    \EM{\textrm{cl}_{#1}(#2)}
}
}
\DeclareDocumentCommand{\ClosureMap}{d[] d()}
{
\EM{\textrm{clMap}(#1, #2)}
}
\DeclareDocumentCommand{\CHP}{d()}
{
\IfNoValueTF{#1}
{
    \textrm{(CHP)}
}
{
    \textrm{(\EM{#1}-CHP)}
}
}
\DeclareDocumentCommand{\CJEP}{d()}
{
\IfNoValueTF{#1}
{
    \textrm{(CJEP)}
}
{
    \textrm{(\EM{#1}-CJEP)}
}
}
\newcommand{\dotminus}{\mathbin{\text{\@dotminus}}}
\newcommand{\@dotminus}{%
  \ooalign{\hidewidth\raise1ex\hbox{.}\hidewidth\cr$\m@th-$\cr}%
}
\DeclareMathOperator{\Lang}{\mathscr{L}}
\def\cA{{\EM{\mathcal{A}}}}
\def\cF{{\EM{\mathcal{F}}}}
\def\cM{{\EM{\mathcal{M}}}}
\def\cN{{\EM{\mathcal{N}}}}
\def\w{\EM{\omega}}
\def\^{\EM{{}^{\And}}}
\def\And{\EM{\wedge}}
\def\<{\EM{\langle}}
\def\>{\EM{\rangle}}
\def\EM#1{\ensuremath{#1}}
\def\ol#1{\EM{\overline{#1}}}
\def\st{\,:\,}
\def\:{\colon}
\providecommand{\dotdiv}{
  \mathbin{
    \vphantom{+}
    \text{
      \mathsurround=0pt 
      \ooalign{
        \noalign{\kern-.35ex}
        \hidewidth$\smash{\cdot}$\hidewidth\cr 
        \noalign{\kern.35ex}
        $-$\cr 
      }%
    }%
  }%
}
\DeclareDocumentCommand{\RightJustify}{m}{\hspace*{\fill}\mbox{#1}\penalty-9999\relax}
\newcounter{margincounter}
\DeclareDocumentCommand{\displaycounter}{}
	{{\arabic{margincounter}}}
\DeclareDocumentCommand{\incdisplaycounter}{}
	{{\stepcounter{margincounter}\arabic{margincounter}}}
\DeclareDocumentCommand{\DeclareComment}{m m m o d()}{%
%
%
%
%
%
%

\expandafter\DeclareDocumentCommand\csname Hide#1\endcsname {}
	{%
	\expandafter\DeclareDocumentCommand\csname #1\endcsname {+m} {}
	
	\expandafter\DeclareDocumentCommand\csname f#1\endcsname {+m} {}

	\expandafter\DeclareDocumentEnvironment{e#1} {} {} {}
	}

\expandafter\DeclareDocumentCommand\csname Show#1\endcsname {}
	{
	\expandafter\DeclareDocumentCommand\csname #1\endcsname {+m}
		{%
		\textcolor{#2}
			{ 
			{\tiny \bf (#3)}
			\IfValueT{#5}
				{%
				#5
				}
			####1
			}
		}

	\expandafter\DeclareDocumentCommand\csname f#1\endcsname {+m}
		{%
		\IfValueTF{#4}
			{
			\textcolor{#2}
			{\text{$\,^{(\incdisplaycounter{#4})}$}}
			\marginpar{\tiny\textcolor{#2}{
				{\text{\tiny $(\displaycounter{#4})$}}
				\text{\IfValueT{#5}{#5}
				####1}}}
			}
			{
			\textcolor{#2}
			{$\,^{(\incdisplaycounter)}$}
			\marginpar{\tiny\textcolor{#2}{
				{\tiny $(\displaycounter)$}
				\text{\IfValueT{#5}{#5}
				####1}}}
			}
		}

	\expandafter\DeclareDocumentEnvironment{e#1} {}
		{
		\textcolor{#2}
		\bgroup
		\IfValueT{#5}
			{%
			#5
			}
		}
		{
		\egroup
		}
	}

\csname Show#1\endcsname

}
\definecolor{NAColor}{rgb}{1.0,0.0,0.0}
\definecolor{ProblemColor}{rgb}{0.7,0.1,0.7}
\definecolor{TBDColor}{rgb}{0.0,0.0,0.8}
\definecolor{MathColor}{rgb}{0.0,0.4,0.1}
\definecolor{NateColor}{rgb}{0.0,0.5,1.0}
\definecolor{MostafaColor}{rgb}{1.0,0.0,1.0}
\definecolor{RefColor}{rgb}{1.0,0.0,1.0}
\definecolor{LaterColor}{rgb}{1.0,0.0,1.0}
\DeclareDocumentCommand{\DeclareCounter}{m}%
		\newcounter{#1}%
\DeclareDocumentCommand{\MyQED}{}{\qed}
\noindent\IfNoValueTF{#1}
{\emph{Proof.\!\!}}
{\emph{Proof\ #1.\ }}
\DeclareDocumentCommand{\ProofLabel}{}{%
%
\addtocounter{ProofLabelcOUntEr}{1}
\label{cUrrEntProoflAbEl\arabic{ProofLabelcOUntEr}}
}
\DeclareDocumentCommand{\ProofRef}{D<>{1}}
{%
\ref{cUrrEntProoflAbEl\arabic{ProofcOUntEr#1}}
}
\DeclareDocumentCommand{\ProofCref}{D<>{1}}
{%
\cref{cUrrEntProoflAbEl\arabic{ProofcOUntEr#1}}
}
\def\TheoremDepth{section}
\DeclareDocumentCommand{\DeclareTheorem}{m o m o}{%
%
%
%
%

\IfNoValueTF{#4}
	{%
	\IfNoValueTF{#2}
		{%
		\newtheorem{#1vArIAblE}{#3}
		}
		{%
		\newtheorem{#1vArIAblE}[#2vArIAblE]{#3}
		}
	}
	{%
	\newtheorem{#1vArIAblE}{#3}[#4]%
	}
\newtheorem*{#1vArIAblE*}{#3}

\DeclareDocumentEnvironment{#1}{o o}

	{
	\IfValueT{##2}%
		{
		\begin{spacing}{##2}
		}
	\IfValueTF{##1}
		{
		\begin{#1vArIAblE}[##1]
		}
		{
		\begin{#1vArIAblE}
		}
%
	\ProofLabel
	}
	{
	\IfValueT{##2}%
		{
		\end{spacing}{##2}
		}
	\end{#1vArIAblE}
	}

\DeclareDocumentEnvironment{#1*}{o o}

	{
	\IfValueT{##2}%
		{
		\begin{spacing}{##2}
		}
	\IfValueTF{##1}
		{
		\begin{#1vArIAblE*}[##1]
		}
		{
		\begin{#1vArIAblE*}
		}
	}
	{
	\IfValueT{##2}%
		{
		\end{spacing}{##2}
		}
	\end{#1vArIAblE*}
	}
}
\theoremstyle{plain}
\theoremstyle{definition}
\theoremstyle{remark}
\begin{document}

\title{Algebraic Sunflowers}

\begin{abstract}
We study sunflowers within the context of  finitely generated substructures of ultrahomogeneous structures. In particular, we look at  bounds on how large a set system is needed to guarantee the existence of sunflowers of a given size. We show that if we fix the size of the sunflower, the function which takes the size of the substructures in our set system and outputs the size of a set system needed to guarantee a sunflower of the desired size can grow arbitrarily slowly. 
\end{abstract}

\author{Nathanael Ackerman}
\address{Harvard University,
Cambridge, MA 02138, USA}
\email{nate@aleph0.net}

\author{Mostafa Mirabi}
\address{Department of Mathematics and Computer Science, The Taft School, Watertown, CT 06795, USA}
\email{mmirabi@wesleyan.edu}

\subjclass[2010]{05D05, 03C13}
\keywords{Sunflower Lemma, Sunflower Conjecture, Ultrahomogeneous structures}

\maketitle

\section{Introduction}

A sunflower, also known as a $\Delta$-system, is a collection of sets such that any two pairs of distinct sets in the collection have a common intersection. When delving into the study of sunflowers a common focus lies in determining the existence of large sunflowers inside a given collection of sets. The existence of large sunflowers has wide ranging applications in computer science including in the study circuit lower bounds,  matrix multiplication, pseudo-randomness and cryptography. For a survey of the connections to computer science see \cite{MR4334977}.

Additionally, the existence of large sunflowers also has applications in mathematical logic, including in the study of forcing large generic structures. See for example \cite{Golshani}, \cite{Cohen}, and \cite{Cohen-Generic-with-Functions_AGM} (where sunflowers are called $\Delta$-systems). 

One of the first major result showing the existence of large sunflowers in any collection of finite sets of the same size was the ``Sunflower Lemma'' of \Erdos\ and Rado. 

\begin{lemma}[Sunflower Lemma \cite{Erdos-Rado}]
\label{Sunflower lemma}
Let $\SF\:\w \times \w \to \w$ be the function where, for $n, k \in \w$, $\SF(n, k)$ is the minimal value such that any set $\cF$ of sets of size $k$ with $|\cF| \geq \SF(n, k)$ has a sunflower of size $n$. 

Then for all $n\in\w$ and $k \geq 3$, 
\[
\SF(n, k) \leq k!(n-1)^k.
\]
\end{lemma}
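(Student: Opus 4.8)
The plan is to prove the \emph{contrapositive} form of the bound by induction on $k$, reproducing the classical peeling argument of \Erdos\ and Rado: I will show that a family $\cF$ of $k$-element sets that contains no sunflower of size $n$ must satisfy $|\cF| < k!(n-1)^k$. The inductive hypothesis is exactly this statement for $(k-1)$-element sets.

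For the inductive step, let $\cF$ be a family of $k$-element sets with no sunflower of size $n$, and choose $\cA = \{A_1, \dots, A_t\} \subseteq \cF$ maximal among its pairwise disjoint subfamilies. Since $A_1, \dots, A_t$ themselves form a sunflower with empty core, we must have $t \leq n-1$, so $Y := A_1 \cup \cdots \cup A_t$ has at most $k(n-1)$ elements, and by maximality every member of $\cF$ meets $Y$. For each $y \in Y$, the family $\cF_y := \{\, A \setminus \{y\} : A \in \cF,\, y \in A \,\}$ consists of \emph{distinct} $(k-1)$-element sets (distinctness holding because $y$ was deleted only from sets that contained it) and, crucially, has no sunflower of size $n$: such a sunflower would pull back, on reinserting $y$ into each petal, to a sunflower of size $n$ in $\cF$. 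The inductive hypothesis then gives $|\cF_y| < (k-1)!(n-1)^{k-1}$ for every $y \in Y$. Counting set--element incidences between $\cF$ and $Y$ yields $|\cF| \leq \sum_{y \in Y} |\cF_y| < |Y| \cdot (k-1)!(n-1)^{k-1} \leq k!(n-1)^k$, which closes the step.

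For the base case, the hypotheses of the lemma do genuine work: the displayed inequality fails for $k = 1$, since $\SF(n,1) = n > n-1$, so the induction cannot simply begin there; and the small cases also presuppose $n \geq 2$, which is harmless because the statement is vacuous or trivial for $n \leq 1$. I would therefore establish the bound directly for $k = 2$ — a sunflower-free family of $2$-element sets is a graph with no matching of size $n$ and no vertex of degree $\geq n$, and the endpoints of a maximum matching give a vertex cover of size at most $2(n-1)$, whence the number of edges is bounded strictly below $2(n-1)^2$ — and then the inductive step above delivers every $k \geq 3$.

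There is no deep obstacle; this is a classical result, and the substantive content sits entirely in the maximal-disjoint-family device together with the incidence count. The fussiest points are administrative: confirming distinctness in $\cF_y$, verifying that the incidence bound still behaves when $t$ is strictly below $n-1$ (in which case $|Y|$ is merely smaller, which only helps), and pinning down the $k = 2$ base case. I expect that last item, together with keeping the chain of inequalities tight enough to land on the non-strict bound in the statement, to be the only part requiring real attention.
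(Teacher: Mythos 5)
The paper itself gives no proof of this lemma---it is quoted directly from \Erdos--Rado---and your argument is precisely that classical proof (maximal pairwise disjoint subfamily, peel off each $y\in Y$, induct on $k$), so in substance you are reproducing the cited source, and correctly: in particular you spotted the one subtlety the paper's ``$|\cF|\ge\SF(n,k)$'' formulation forces, namely that a sunflower-free family of $k$-sets must have size \emph{strictly} less than $k!(n-1)^k$, since the usual non-strict bound would only yield $\SF(n,k)\le k!(n-1)^k+1$. Two small points to tighten: your $k=2$ base case as written only gives the non-strict count (a cover of $\le 2(n-1)$ vertices, each of degree $\le n-1$, bounds the edges by $2(n-1)^2$); strictness needs one extra observation, e.g.\ that every edge of the maximum matching is counted at both of its endpoints (and that an empty matching means an empty graph), after which $m(2n-3)<2(n-1)^2$ closes it. Also, the blanket assumption $n\ge 2$ should be stated rather than dismissed as vacuous, since for $n\le 1$ the displayed bound is actually false as literally stated ($\SF(1,k)=1>0$); this is a defect of the statement, not of your induction.
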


The Sunflower Lemma asserts that any sufficiently large collection of sets of a fixed size is guaranteed to contain a large sunflower. Moreover, it provides an upper bound on how large the collection of sets must be. 

While the Sunflower Lemma gives an upper bound on the size of the sets needed to have a sunflower, it was clear even when it was proved that it was not optimal. This gave rise to the ``Sunflower Conjecture''.
\begin{conjecture}[Sunflower Conjecture \cite{Erdos-Rado}]
For all $n \in \w$ there is a constant $c_n$ such that
\[
(\forall k\geq 3)\,\, \SF(n, k) \leq c_n^k.
\]
\end{conjecture}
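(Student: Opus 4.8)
The statement in question is the \Erdos--Rado Sunflower Conjecture, open since 1960, so what follows is necessarily a description of the most successful known line of attack together with an honest account of the gap that remains. The plan is to replace the greedy pigeonhole extraction underlying the Sunflower Lemma by a probabilistic argument built around the notion of a \emph{spread} family, following the approach of Alweiss, Lovett, Wu, and Zhang.

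First I would set up the standard reduction. Suppose $\cF$ is a family of $k$-element sets with no sunflower of size $n$; the goal is to bound $|\cF|$. If some set $Y$ with $1 \le |Y| < k$ is contained in at least $\SF(n, k-|Y|)$ members of $\cF$, then restricting those members to their complements of $Y$ and recursing produces a sunflower of size $n$ whose core contains $Y$, a contradiction; so we may assume no such $Y$ exists, i.e.\ $\cF$ is ``uniform'' in a quantitative sense. This is exactly the step that, carried out crudely, gives the $k!(n-1)^k$ bound of \cref{Sunflower lemma}, and the point is to run it more efficiently.

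Second, I would introduce the key definition: $\cF$ is $r$-spread if for every set $Y$ at most $r^{-|Y|}|\cF|$ members of $\cF$ contain $Y$. The heart of the matter is the \emph{robust sunflower lemma}: there is an absolute constant $C$ such that every nonempty family of $k$-sets that is $(C\log k)$-spread contains a robust sunflower, from which one extracts an honest sunflower of any prescribed size once $|\cF|$ is large in terms of $n$. Its proof is the probabilistic core: sample a uniformly random subset $W$ of the ground set of a suitable density and show, by an encoding/entropy estimate, that with positive probability $W$ contains a member of $\cF$ disjoint from a chosen core, iterating to build the petals. Feeding the uniformization step into this and bookkeeping the losses yields $\SF(n,k) \le (Cn\log k)^k$ for an absolute constant $C$; later work of Rao, of Bell--Chueluecha--Warnke, and Tao's exposition tighten the constant but not the shape of the bound.

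The hard part---in fact essentially the entire remaining content of the conjecture as stated here---is the stray factor of $\log k$ in the base of the exponential. To obtain a bound of the form $c_n^k$ one would need robust sunflowers already in $O_n(1)$-spread families, which fails for the naive notion of robustness, so any successful approach must either weaken the target structure just enough that it survives at constant spread, or abandon the probabilistic extraction for a genuinely new mechanism. I do not know how to do either, and it remains consistent with current knowledge that the conjecture is false; accordingly, in this paper the statement is recorded as a conjecture rather than proved, and the results below concern the orthogonal phenomenon described in the abstract.
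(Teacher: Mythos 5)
You have not proved the statement, and you say so yourself; the only thing to record here is that this is the right assessment. The statement is the \Erdos--Rado Sunflower Conjecture, which the paper cites as an open problem and does not prove, so there is no proof of the paper's to compare your proposal against. Your summary of the strongest known partial result is essentially accurate: the reduction to spread families, the probabilistic robust-sunflower lemma of Alweiss--Lovett--Wu--Zhang (the random-subset/encoding argument), and the resulting bound of shape $(Cn\log k)^k$, with subsequent refinements (Rao, Bell--Chueluecha--Warnke, Tao) improving constants but not removing the $\log k$ in the base. You also correctly locate the remaining obstruction: passing from an $O(\log k)$-spread hypothesis to an $O_n(1)$-spread one, which is exactly what would be needed to get $\SF(n,k) \le c_n^k$, and which no known technique achieves.

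So the ``gap'' in your proposal is the conjecture itself, not an error in your reasoning; nothing you wrote would mislead a reader into thinking the statement has been established. Two small cautions if this text were to stand on its own: the uniformization step you sketch (no set $Y$ lies in too many members of $\cF$) needs the quantitative bookkeeping spelled out before it ``feeds into'' the spread hypothesis, since as stated it only rules out very heavy cores; and the extraction of an honest $n$-petal sunflower from a robust sunflower requires $|\cF|$ large relative to $n$ in a way that should be made explicit if you ever convert this sketch into a proof of the known $(Cn\log k)^k$ bound. Neither point affects your (correct) conclusion that the statement as given remains open.
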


Over the years some progress has been made on this conjecture, with an important step forward coming recently in \cite{MR4334977}. However, the full conjecture remains open. 

Recently a new approach to studying the sunflower conjecture emerged in \cite{MR4622593} in which the authors study the sunflower conjecture in the context of set systems which satisfy various structural properties. Specifically there are three structural properties which they consider. First, they consider the case where the set system has finite VC dimension. Second, they consider the case where the set system has finite Littlestone dimension. And third, they consider the case when the set system is required to arises naturally from some background structure, e.g. the plane. 

In \cite{MR4622593}, the authors are able to provide improved bounds on $\SF$ when the set system has finite VC dimension as well as finite Littlestone dimension. But, in the case where the set systems arise from the background structure of the plane they are able to prove the full sunflower conjecture. 

In this paper, we aim to study the functions $\SF$ under the assumption that the set system come from some background structure. Specifically, we will work in a situation where there is some background ultrahomogeneous algebraic structure $\cM$ where every finitely generated substructure of $\cM$ is finite. Let $\SF[\cM]$ be the analog of $\SF$ restricted to finitely generated substructures of $\cM$. Our main result will show that for each $n \in \w$ and each function $\alpha\:\w \to \w$ there is an ultrahomogeneous structure $\cM$  such that $(\forall r \geq 3)\,  \SF[\cM](n, r)\leq \alpha(r)$, i.e. for every function, no matter how slowly it grows, we can find an algebraic structure such that the bound on the size of the set needed for an $n$-size sunflower grows slower than that function. Further, we can choose the structure $\cM$ to be totally categorical.

\subsection{Notation}

Let $\SeqSize[n](k)$ be the number of non-repeating sequences of size $k$ on $n$-elements. Suppose $p$ is a unary function. Let $\ol{p}(x) = \{p^k(x)\}_{0 < k \in \w}$.  If $X$ is a set we let $\Powerset(X)$ denote the collection of subsets of $X$. 

If $\alpha\:\w \to \w$ be a non-decreasing function whose limit is infinity let $\alpha^\circ\:\w \to \w$ be such that $\alpha^\circ(n) = \min\{k \st \alpha(k) \geq n\}$. 

Suppose $\Lang$ is a first order language and $\cM$ is a $\Lang$-structure. We say $\cM$ is an \defn{algebraic structure} when $\Lang$ has only function symbols. We say $\cM$ is \defn{locally finite} if for every finite set $A$ there is a finite substructure of $\cM$ containing $A$.  We let $\SubStr[k](\cM)$ be the collection of finitely generated substructures of $\cM$ of size at most $k$. We let $\SubStr(\cM) = \bigcup_{k \in \w} \SubStr[k](\cM)$. 

We say that a $\Lang$-structure $\cM$ is \defn{ultrahomogeneous} if, whenever $A, B$ are finitely generated substructures of $\cM$, and $\sigma\:A \to B$ is an isomorphism, then there is an automorphism of $\cM$ extending $\sigma$. We say $\cM$ has a the \defn{strong amalgamation property} or (SAP) if whenever $i_0\:A \to B$ and $j_0\:A \to C$ are embeddings of finitely generated substructures of $\cM$ there are embeddings $i_1\:B \to \cM$ and $j_1\:C \to \cM$ such that $i_1 \circ i_0 = j_1 \circ j_0$ and $i_1``[B] \cap j_1``[C] = (i_1 \circ i_0)``[A]$. Note $\cM$ has (SAP) precisely when its age does. 

An infinite structure is called \defn{totally categorical} whenever any two models of its first order theory of the same size are isomorphic. Structures which are totally categorical are particularly simple from a model theoretic point of view. For example, note that the theory of sets in the empty language is an example of totally categorical structures.

\section{Algebraic Sunflowers}

We now formalize the functions we want to consider. 

\begin{definition}
Suppose $\cM$ is a locally finite $\Lang$-structure and $X \subseteq \SubStr(\cM)$. 
\begin{itemize}
\item We say $X$ is \defn{uniform} if for all $A, B \in X$ there is an isomorphism between $A$ and $B$. 

\item We say $X$ is \defn{strongly uniform} if for all $A, B \in X$ there is an isomorphism from $A$ to $B$ which is the identity on $A\cap B$. 

\item We say $X$ is a \defn{sunflower} if for all $A_0, A_1, B_0, B_1 \in X$, $A_0 \cap A_1 = B_0 \cap B_1$. 

\end{itemize}

\end{definition}

\begin{definition}
Suppose $\Lang$ is a language and $\cM$ is a locally finite $\Lang$-structure. For each $n, k \in \w$ let $\SF[\cM](n, k)$ be the least $\ell$ such that whenever $X \subseteq \SubStr[k](\cM)$ and $|X| \geq \ell$ then there is an $X_0 \subseteq X$ such that $|X_0| = n$ and $X_0$ is a sunflower. 
\end{definition}

We will omit mention of $\cM$ when $\Lang$ is the empty language and $\cM$ is infinite. In this case $\SF(n, k)$ is the size of a set, all of whose elements have size at most $k$, which guarantees a sunflower of size $n$. Note that if $X$ is a finite set of elements all of which have size at most $k$ there is a set $X^*$ and a bijection $i\:X \to X^*$ such that 
\begin{itemize}
\item for all $x \in X^*$, $|x| = k$, 

\item for all $x \in X$, $x \subseteq i(x)$, 

\item for all $x \in X$, $(i(x) \setminus x) \cap \bigcup X = \emptyset$,

\item for any $X_0 \subseteq X$, $X_0$ is a sunflower if and only if $i``[X_0]$ is a sunflower. 
\end{itemize}

In particular, when dealing with sunflowers for sets without structure, there is no difference with considering sets all of the same size, or sets of bounded size. This is equivalent to saying that we loose no generality only considering strongly uniform sunflowers.

Note if there are no sunflowers of size $n$ consisting of elements of size at most $k$ then $\SF[\cM](n, k) = \infty$. By the Sunflower Lemma this can only happen if there are less than $\SF(n, k)$ substructures of $\cM$ of size at most $k$. Therefore it will make sense to restrict attention to structures where $\SubStr[k](\cM)$ is infinite for any sufficiently large $k$. In particular we will restrict attention to structures which are ultrahomogeneous with (SAP). 

Note that for any $\cM$ and any $n, k \in \w$ we have $\SF[\cM](n, k) \geq n$. The following straightforward lemma shows that for any fixed $k$ this lower bound can be achieved. 

\begin{proposition}
\label{Lower bounds for fixed k}
Let $\Lang = \{f\}$ where $f$ is a unary function. Then for every $k \in \w$ there is an $\Lang$-structure $\cM_k$ where 
\begin{itemize}
\item $\cM_k$ is ultrahomogeneous, 

\item $\cM_k$ has (SAP), 

\item $\cM_k$ is totally categorical, 

\item $(\forall n \in \w)\, \SF[\cM_{k}](n, k) = n$.
\end{itemize}
\end{proposition}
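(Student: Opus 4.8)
The plan is to build $\cM_k$ from a single unary function symbol $f$ so that $\cM_k$ consists of infinitely many pairwise disjoint "orbits," each of which is a directed cycle-free configuration that forces every finitely generated substructure of size at most $k$ to live inside a single orbit, and moreover so that any two distinct such substructures of size at most $k$ are automatically disjoint. If we can arrange the last property, then for any $X \subseteq \SubStr[k](\cM_k)$ any two pairs of distinct elements have empty intersection, hence $A_0 \cap A_1 = \emptyset = B_0 \cap B_1$; so every subset of $X$ of size $n$ is a sunflower, giving $\SF[\cM_k](n,k) \le n$, and combined with the universal lower bound $\SF[\cM_k](n,k) \ge n$ noted before the statement we get equality.

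Concretely, I would take each orbit to be a copy of $(\w, s)$ where $s$ is the successor function, or — to make finitely generated substructures genuinely finite — a "backward ray into a single fixed point," i.e.\ on each orbit let $f$ act as $x \mapsto x-1$ with $f(0) = 0$, so that the substructure generated by a point at distance $d$ from the root is the finite chain of length $d+1$ down to the root. The key structural check is then: a substructure of size at most $k$ generated by finitely many points in one orbit is the union of the finitely many downward chains they determine, which is finite; and since distinct orbits are disjoint and $f$ never moves between orbits, a finitely generated substructure meeting two orbits would be a disjoint union and thus "decompose," but we only ever need substructures that are subsets of $\SubStr[k]$, and two such generated substructures coming from different orbits are disjoint — the mild subtlety is that two substructures could both sit inside the \emph{same} orbit and overlap near the root. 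To kill that, I would instead use orbits with \emph{no} fixed point, e.g.\ each orbit a copy of $(\Integers, x\mapsto x-1)$; but then substructures are infinite. The cleanest fix is to make the language carry the structure differently: realize $\cM_k$ so that the generated substructure of size $\le k$ of any tuple is forced to be exactly an isomorphic copy of a fixed finite $\Lang$-structure $\cA_k$ of size $k$, with infinitely many pairwise disjoint copies of $\cA_k$ partitioning $\cM_k$ and $f$ permuting each copy internally. Then $\SubStr[k](\cM_k)$ is exactly the set of these disjoint copies (plus possibly substructures of size $<k$ inside one copy, which one checks are still pairwise-or-disjoint in the way needed, or one arranges $\cA_k$ to be generated by any single element so the only substructures of size $\le k$ are whole copies).

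For the model-theoretic adjectives: ultrahomogeneity and (SAP) follow because the age of $\cM_k$ is the class of finite disjoint unions of copies of (substructures of) $\cA_k$, which has strong amalgamation — given two extensions of a common finitely generated substructure, amalgamate over the shared copies and place the genuinely new copies into fresh orbits of $\cM_k$ — and any isomorphism between finitely generated substructures matches up copies of $\cA_k$ and so extends to a permutation of $\cM_k$ commuting with $f$, i.e.\ an automorphism. Total categoricity I would get by checking that the complete first-order theory of $\cM_k$ says "$f$ has the local shape of $\cA_k$ everywhere and there are infinitely many orbits"; this theory is $\aleph_0$-categorical by back-and-forth (it is the theory of $\aleph_0$ disjoint copies of a fixed finite $f$-structure, which is $\w$-categorical since it eliminates quantifiers down to orbit-isomorphism type, a finite amount of data) and it is also $\aleph_1$-categorical because the number of orbits is the only invariant of a model and for uncountable models it is pinned down by cardinality, so the theory is $\kappa$-categorical in every infinite $\kappa$, i.e.\ totally categorical.

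The main obstacle I anticipate is the bookkeeping around substructures of size strictly less than $k$ and substructures that could overlap inside one orbit: the whole argument hinges on the claim that any two \emph{distinct} members of $\SubStr[k](\cM_k)$ are disjoint (so that every $n$-subset is a sunflower). The safe way to force this is to choose the per-orbit structure $\cA_k$ so that it is generated by each of its elements individually and has exactly $k$ elements — e.g.\ a $k$-cycle under $f$, if one is willing to allow cycles; if cycles are disallowed for some reason, one uses a finite structure of size $k$ that is still singly generated — so that the \emph{only} finitely generated substructures of size at most $k$ are the full orbits, which are pairwise disjoint by construction. Verifying that such an $\cA_k$ exists with $f$ unary, that its disjoint-union age has (SAP), and that the resulting theory is totally categorical, is the crux; everything else is routine.
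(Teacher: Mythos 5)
Your final construction---infinitely many pairwise disjoint copies of a single $k$-element, singly generated $f$-structure (a $k$-cycle), so that the only members of $\SubStr[k](\cM_k)$ are whole orbits and any $n$ of them form a sunflower---is exactly the paper's proof, which takes $\cM_k$ to be $\w$ many disjoint copies of the $k$-element structure on which $f$ is a bijection. The proposal is correct, and the hesitation about allowing cycles is unnecessary since $f$ is merely a unary function symbol.
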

\begin{proof}
For $k \in \w$ let $C_k$ be the $\Lang$-structure with $k$-elements and where $f$ is a bijection. Let $\cM_k$ be the union of $\w$-many disjoint copies of $C_k$. 

Now let $\cF$ be any collection of sets substructures of $\cM_k$ of $k$. Each element of $\cF$ must be isomorphic to $C_k$ and hence no two elements of $\cF$ can have non-empty intersection. Therefore $\cF$ is a sunflower of size $|\cF|$. 
\end{proof}

\cref{Lower bounds for fixed k} showed us that if we fix the size of the subsets we are considering we can obtain the minimal bound on the size of sets needed to guarantee a large sunflower.  We did this by, intuitively, enlarging each point to be a substructure of the desired size. The problem of finding a sunflower among sets of size $k$ in $\cM_k$ then reduced to the problem of finding a sunflower of size $1$ among ordinary sets.  

Next look at what happens if instead of fixing the size of the sets we want to consider we fix the size of the sunflower we want to find. We then have to consider substructures of arbitrary sizes and so we need a more complicated construction. However, the underlying idea is similar. Our goal is to construct a structure whose substructures act like ordinary sets, but where the size of the substructure corresponding to as set of a given size grows arbitrarily fast. This will then let us use the Sunflower Lemma to ensure that for any fixed $n$ we can find structures $\cM$ where $\SF[\cM](n, k)$ grows arbitrarily slowly as a function of $k$. 

\begin{theorem}  
\label{Main Theorem}
There is a finite language $\Lang$, consisting of only unary and binary functions, such that whenever $\alpha\: \w \to \w$ is non-decreasing with $\alpha(0) \geq 3$ and $\lim_{n \to \infty} \alpha(n) = \infty$ there is a locally finite countable $\Lang$-structure $\cM_{\alpha}$ such that 
\begin{itemize}
\item[(a)] $\cM_{\alpha}$ is ultrahomogeneous, has (SAP), and is totally categorical, 

\item[(b)] if $\cA_0, \cA_1 \in \SubStr(\cM_{\alpha})$  with $|\cA_0| = |\cA_1|$, then $\cA_0 \cong \cA_1$, 

\item[(c)] if $\cF \subseteq \SubStr(\cM_{ \alpha})$ is finite, then there is a $\cF^* \subseteq \SubStr(\cM_{\alpha})$ and a bijection $i\:\cF \to \cF^*$ such that 
\begin{itemize}
\item for all $\cA_0, \cA_1 \in \cF^*$, $\cA_0 \cong \cA_1$, 

\item for all $\cA \in \cF$, $\cA \subseteq i(\cA)$, 

\item if $\cF_0 \subseteq \cF$, then $\cF_0$ is a sunflower if and only if $i[\cF_0]$ is a sunflower.
\end{itemize}

\item[(d)] all uniform sunflowers are strongly uniform, 

\item[(e)] $(\forall n\in \w)(\forall k)\, \SF[\cM_{\alpha}](n, k) \leq \alpha(k) \cdot (n-1)^{\alpha(k)}$.  
\end{itemize}
\end{theorem}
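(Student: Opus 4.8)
The plan is to deduce (e) from the $\Erdos$--Rado Sunflower Lemma by arranging that the finite substructures of $\cM_{\alpha}$ behave, as far as sunflowers are concerned, exactly like finite sets, while a substructure of size at most $k$ is ``spanned'' by only about $\alpha^{\circ}$--many generators. First I would fix the numerical data: using $\alpha^{\circ}$ and iterated compositions of the sequence-counting function $m\mapsto\SeqSize[m](2)$, choose a strictly increasing $s\colon \w\to\w$ with $s(0)=0$ and $s(m)\ge \alpha^{\circ}(m!)$ for all $m\ge 1$, and set $r(k):=\max\{m : s(m)\le k\}$. Then $r$ is non-decreasing, tends to infinity, and by the choice of $s$ one gets $r(k)!\le\alpha(k)$ (hence also $r(k)\le\alpha(k)$) for every $k$; the point is that a substructure of $\cM_{\alpha}$ of size $\le k$ should have exactly $r(k)$ ``generators''.

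The core of the proof is the construction of $\cM_{\alpha}$. I would build it as the $\Fraisse$ limit of a class $\cK_{\alpha}$ of finite $\Lang$--structures over one fixed finite language $\Lang$ of unary and binary function symbols, designed so that: (i) $\cK_{\alpha}$ has strong amalgamation and is uniformly locally finite, the substructure generated by $m$ ``independent'' elements having size exactly $s(m)$; (ii) the associated closure operator is degenerate (``set--like''): there is a distinguished countable ``base'' $B\subseteq\cM_{\alpha}$ such that every finitely generated substructure equals $\langle A\rangle$ for a unique finite $A\subseteq B$, with $|\langle A\rangle|=s(|A|)$ and $\langle A\rangle\cap\langle A'\rangle=\langle A\cap A'\rangle$; (iii) $\cK_{\alpha}$ has exactly one isomorphism type of each size $s(m)$ and of no other size, with compatible embeddings between them. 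The unary/binary functions are what force the growth: $m$ independent base points must generate the whole $s(m)$--element ``gadget'', and this is where iterating the sequence count enters, to build the gadgets out of arity--$\le 2$ operations. The main obstacle I expect is carrying out (i)--(iii) simultaneously in a single fixed finite language while keeping the limit totally categorical: total categoricity forces a coordinatisation by a pure set, so the gadgets must be arranged so as to be definably equivalent to plain finite sets. Total categoricity itself I would verify directly --- uniform local finiteness yields $\aleph_{0}$--categoricity, and the degenerate geometry together with (iii) supplies the rigidity needed for $\aleph_{1}$--categoricity.

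Granting the construction, (a)--(d) are essentially bookkeeping. For (a), ultrahomogeneity and (SAP) come from the $\Fraisse$/strong--amalgamation setup and total categoricity is as above; (b) is exactly (iii). For (c), given finite $\cF\subseteq\SubStr(\cM_{\alpha})$ put $k=\max_{\cA\in\cF}|\cA|$, write each $\cA=\langle A\rangle$ with $|A|=j\le r(k)$, and set $i(\cA):=\langle A\cup A'_{\cA}\rangle$ where $A'_{\cA}\subseteq B$ is a set of $r(k)-j$ fresh base points, chosen (using homogeneity and strong amalgamation) so that the added base points are pairwise distinct across members of $\cF$ and disjoint from $\bigcup\cF$; then all $i(\cA)$ have size $s(r(k))$, hence are isomorphic, $\cA\subseteq i(\cA)$, and $i(\cA)\cap i(\cA')=\langle A\cap A'\rangle=\cA\cap\cA'$, so sunflowers are preserved in both directions. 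For (d), if $\cA=\langle A\rangle\cong\cB=\langle B\rangle$ then $\cA\cap\cB=\langle A\cap B\rangle$; extend a bijection $A\to B$ fixing $A\cap B$ to an isomorphism $\cA\to\cB$ and then, by homogeneity, to an automorphism of $\cM_{\alpha}$, exhibiting the required isomorphism that is the identity on $\cA\cap\cB$.

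Finally, for (e): fix $n\ge 2$ and $k$, and suppose $\cF\subseteq\SubStr[k](\cM_{\alpha})$ with $|\cF|\ge\alpha(k)(n-1)^{\alpha(k)}$. Enlarge $\cF$ to the uniform family $\cF^{*}=i[\cF]$ of (c), all of whose members are $\langle A\rangle$ for distinct $r(k)$--element subsets $A\subseteq B$; by (c) and the fact that $A\mapsto\langle A\rangle$ is a bijection on finite subsets of $B$ commuting with intersections, a subfamily of $\cF$ is a sunflower iff the corresponding family of $r(k)$--subsets of $B$ is. Since
\[
|\cF^{*}|=|\cF|\ \ge\ \alpha(k)(n-1)^{\alpha(k)}\ \ge\ r(k)!\,(n-1)^{r(k)}\ \ge\ \SF(n,r(k)),
\]
where the last inequality is the Sunflower Lemma when $r(k)\ge 3$ and, when $r(k)\le 2$, follows from $\alpha(k)\ge 3$ and a direct estimate for $\SF(n,2)$, that family of subsets contains a sunflower of size $n$, hence so does $\cF$. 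Thus $\SF[\cM_{\alpha}](n,k)\le\alpha(k)(n-1)^{\alpha(k)}$, which is (e).
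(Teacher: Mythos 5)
Your reduction is sound and matches the paper's strategy in outline: make the finitely generated substructures of $\cM_\alpha$ behave exactly like finite subsets of a distinguished base (closure commuting with intersection, one isomorphism type per size), so that a substructure of size $\leq k$ has only $r(k)$ generators with $r(k)!\leq\alpha(k)$, and then quote \Erdos--Rado. Your bookkeeping for (a)--(d), the numerics relating $s$, $r$, $\alpha^\circ$, and the transfer of the Sunflower Lemma in (e) are all fine. But there is a genuine gap, and you name it yourself: you never carry out the construction of the class $\cK_\alpha$ satisfying (i)--(iii) in a single fixed finite language, calling it ``the main obstacle I expect.'' That construction is precisely the content of the theorem; without it the argument is a conditional reduction, not a proof. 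Moreover, the mechanism you sketch for it --- forcing the gadget of $m$ base points to have size $s(m)$ by ``iterating the sequence count'' with arity-$\leq 2$ operations --- is not obviously workable, since the number of terms generated by a fixed finite stock of unary and binary function symbols from $m$ elements is bounded by a function of $m$ determined once and for all by $\Lang$, whereas $s(m)\geq\alpha^\circ(m!)$ must be allowed to grow arbitrarily fast as $\alpha$ varies while $\Lang$ stays fixed. Some extra device is needed to decouple gadget size from term growth.

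The paper's explicit construction supplies exactly that device. It takes $\Lang=\{c,s,p_0,p_1,a\}$ and builds $\cN_\beta$ out of $s$-cycles whose lengths lie in the range of a fast-growing $\beta$; each cycle carries one distinguished $c$-fixed point, and only these distinguished points interact with $p_0,p_1,a$, which code them as non-repeating tuples of ``base'' points (those in cycles of length $\beta(0)$), with $p_0,p_1$ as projections and $a$ as a partial pairing. The size inflation is thus carried entirely by the cycle length $\beta$, not by iterated operations, which is why one fixed language works for every $\alpha$: the substructure generated by $n$ base points has size $\gamma_\beta(n)=\sum_{k\leq n}\SeqSize[n](k)\cdot\beta(k)$, and $\beta$ is chosen after $\alpha$ so that $\gamma_\beta^\circ(k)!\leq\alpha(k)$. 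Verifying your conditions (i)--(iii) (set-like closure $b(\cA_0)\cap b(\cA_1)=b(\cA_0\cap\cA_1)$, bijections of bases extending uniquely to isomorphisms, one isomorphism type per size, ultrahomogeneity, (SAP), total categoricity) is then a direct check against this concrete definition rather than an appeal to a \Fraisse\ limit of a class whose existence has not been established. To repair your write-up you would need to either reproduce such an explicit structure or prove that a class with properties (i)--(iii) exists for every sufficiently fast-growing $s$ over one fixed language; as it stands that is assumed, not shown.
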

\begin{proof}
Let $\Lang_0 = \{c, s\}$ where $c, s$ are unary functions. Let $\beta\: \w \to \w$ be an increasing function. Let $\cN^-_\beta$ be the unique countable infinite $\Lang_0$-structure such that  
\begin{itemize}
\item $s$ is a bijection, 

\item for each $x \in \cN^-_\beta$ there is a $k\in \w$ such that $s^k(x) = x$. Let $i(x)$ be the least such $k$, 

\item for all $m \in \w$ there are infinitely many $x$ with $i(x) = \beta(m)$, 

\item for each $x$ there is an $m$ such that $i(x) = \beta(m)$. Let $m = j(x)$. 

\item $c^2 = c$, 

\item for all $x \in \cN^-_\beta$, there is a $k$ such that $s^k(x) = c(x)$. 

\end{itemize}

In other words, $\cN_\beta$ consists of infinitely many $s$-cycles of each length in $\{\beta(m)\st m \in \w\}$. Further each $s$ cycle has a unique distinguished element which is a fixed point of $c$.

Let $\Lang = \Lang_0 \cup \{p_0, p_1, a\}$ where $p_0, p_1$ are unary and $a$ is binary. 
Let $\cN_\beta$ be the $\Lang$-structure where 
\begin{itemize}
\item $\cN_\beta \rest[\Lang_0] = \cN_\beta^-$, 

\item if $c(x) \neq x$ then $p_0(x) = p_1(x) = x$ and for all $y \in \cN_\beta$, $a(x,y) = x$ and $a(y, x) = y$.  

\item if $j(x) = 1$ then $p_0(x) = p_1(x) = x$, 

\item if $c(x) = x$ and $j(x) = n+1$ then 
\begin{itemize}
\item $j(p_0(x)) = 1$, 

\item $j(p_1(x)) = n$, 

\item $c(p_0(x)) = p_0(x)$ and $c(p_1(x)) = p_1(x)$, 

\item $\ol{p_0}(x)$ has size $n+1$. 

\end{itemize}

\item if $c(x) = x$, $c(y) = y$, $j(x) = k$ and $j(y) = n$ then 
\begin{itemize}
\item if $k \neq 1$ then $a(x, y) = x$, 

\item if $k = 1$ and $x \in \ol{p_0}(y)$ then $a(x, y) = x$, 

\item if $k = 1$ and $x \not\in \ol{p_0}(y)$ then $j(a(x,y)) = n+1$,  $p_0(a(x,y)) = x$ and $p_1(a(x,y)) = y$, 

\item if $\ol{p}(x) = \ol{p}(y)$ then $x = y$, 

\end{itemize}

\end{itemize} 

Intuitively the structure $\cN_\beta$ is constructed as follows. First we divided $\cN_\beta$ into infinitely many equivalence classes, where all elements in the same equivalence class are some power of $s$ of each other. Furthermore, the size of the equivalence classes are precisely the range of $\beta$ (and there are infinitely many equivalence classes of each size). 

Inside each equivalence class we have a distinguished element which is a fixed point of $c$. These are the only elements for which the functions $p_0, p_1, a$ may be non-trivial. 

Now on these distinguished elements we have three functions, $p_0, p_1$ and $a$. If $x$ is an element in the equivalence class with $\beta(n)$ many elements then we want to think of $x$ as a tuple of length $n+1$, without repetitions, of equivalence classes with $\beta(0)$ many elements. We then think of $p_0$ as the projection onto the first element of the tuple and $p_1$ as the tuple that results from removing the first element of the tuple $x$. We then think of $a(x,y)$ as a pairing function which can add a single element $x$ to the front of the tuple $y$, but only if $x$ is not already in the tuple $y$. 

For any substructure $\cA \subseteq \cN_\beta$ let $b(\cA) = \{x \in \cA \st c(x) = x\text{ and }p_0(x) = x\}$, i.e. the collection of distinguished elements of equivalence classes with $\beta(0)$ many elements. The following are then immediate for (possibly infinite) substructures $\cA_0, \cA_1$ of $\cN_\beta$. 
\begin{itemize}
\item if $b(\cA_0) = b(\cA_1)$, then $\cA_0 = \cA_1$, 

\item if $i\:b(\cA_0) \to b(\cA_1)|$ is an bijection, then $i$ extends uniquely to an isomorphism $i^*\:\cA_0 \cong \cA_1$,

\item $b(\cA_0) \cap b(\cA_1) = b(\cA_0 \cap \cA_1)$.

\end{itemize}

Further, for any subset $A \subseteq b(\cN_\beta)$ there is a substructure $\cA \subseteq \cN_\beta$ with $b(\cA) = A$. Put together these imply $\cN_\beta$ satisfies (a) - (d).  All that is left is to choose a $\beta$ such that $\cN_\beta$ satisfies (e) with respect to $n$ and $\alpha$. 

Let $\gamma_\beta\:\w \to \w$ be the function where 
\[
\gamma_\beta(n) = \sum_{k \leq n} \SeqSize[n](k) \cdot \beta(k).
\]
Note $\gamma_\beta$ is increasing with $\lim_{n\to \infty} \gamma_\beta(n) = \infty$. We then have for any substructure $\cA$ that 
\[
|\cA| = \gamma_\beta(|s(\cA)|). 
\]

We also have from \cite{Erdos-Rado} that
\[
\SF[\cN_\beta](n, \gamma_\beta(k)) \leq k!(n-1)^k \leq k!(n-1)^{k!}
\]
and so
\[
\SF[\cN_\beta](n, k) \leq (\gamma_\beta^\circ(k))!(n-1)^{\gamma_\beta^\circ(k)!}
\]

But as $\beta$ goes to infinity and $\gamma_\beta^\circ$ is constant on the interval from $[\beta(k), \beta(k+1))$ we can find a $\beta_\alpha$ such that $(\forall k \in \w)\, \gamma_{\beta_\alpha}^\circ(k)! \leq \alpha(k)$. So if we let $\cM_\alpha = \cN_{\beta_\alpha}$, we have for all $n, k \in \w$
\[
\SF[\cM_{\alpha}](n, k) \leq 
\alpha(k) \cdot (n-1)^{\alpha(k)}
\]
as desired.  
\end{proof}

\begin{remark}
Note that when considering the function $\SF[\cM](n, k)$ we are considering finite substructures of $\cM$ whose size is bounded by $k$. One might instead wish to consider finite substructures which have a generating set whose size is bounded by $k$. However, the structure constructed in the proof of \cref{Main Theorem} shows that this approach would be problematic as every finitely generated substructure is generated by a single element. Specifically if $(A_i)_{i \in \w}$ is an increasing collection of finite substructures of $\cM_\alpha$ then $\{A_i\}_{i \in \w}$ is an infinite set system, each of which is generated by a single element, but which contains no sunflower of size $3$. 
\end{remark}

The following corollary, which shows that for any fixed $n$, the map $k \mapsto \SF[\cM](n, k)$ can be chosen to grow arbitrarily slowly, is then immediate from \cref{Main Theorem}. 

\begin{corollary}  
There is a finite language $\Lang$ such that whenever
\begin{itemize}
\item $n \in \w$, 

\item $\alpha\: \w \to \w$ is non-decreasing with $\alpha(0) \geq 3 \cdot (n-1)^3$ and $\lim_{n \to \infty} \alpha(n) = \infty$,
\end{itemize}
there is a locally finite countable $\Lang$-structure $\cM_{n, \alpha}$ such that 
\begin{itemize}

\item $\cM_{n, \alpha}$ is ultrahomogeneous, has (SAP) and is totally categorical,

\item $(\forall k)\, \SF[\cM_{n, \alpha}](n, k) \leq \alpha(k)$.  
\end{itemize}
\end{corollary}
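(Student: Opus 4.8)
The plan is to obtain the corollary as an immediate consequence of \cref{Main Theorem} by feeding it a suitably rescaled version of $\alpha$. Let $\Lang$ be the finite language of unary and binary functions furnished by \cref{Main Theorem}. Fix $n \in \w$ and $\alpha$ as in the hypotheses. For any non-decreasing $\beta \colon \w \to \w$ with $\beta(0) \geq 3$ and $\lim_{k} \beta(k) = \infty$, \cref{Main Theorem} produces a locally finite countable $\Lang$-structure $\cM_\beta$ which is ultrahomogeneous, has (SAP), is totally categorical, and satisfies $\SF[\cM_\beta](n,k) \leq \beta(k)\cdot(n-1)^{\beta(k)}$ for all $k$. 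So it suffices to find such a $\beta$ with $\beta(k)\cdot(n-1)^{\beta(k)} \leq \alpha(k)$ for every $k$, and then set $\cM_{n,\alpha} \defas \cM_\beta$.

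First I would carry out the construction of $\beta$. We may assume $n \geq 2$ (the cases $n \leq 1$ are immediate). Set $h(m) = m\cdot(n-1)^m$, a strictly increasing and unbounded function of $m \in \w$, and define
\[
\beta(k) = \max\{\, m \geq 3 \st h(m) \leq \alpha(k)\,\}.
\]
This is well defined: the set on the right is bounded because $h$ is unbounded, and it is nonempty because $h(3) = 3\cdot(n-1)^3 \leq \alpha(0) \leq \alpha(k)$, using the standing hypothesis $\alpha(0) \geq 3\cdot(n-1)^3$. In particular $\beta(k) \geq 3$ for every $k$.

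It remains to check that $\beta$ meets the hypotheses of \cref{Main Theorem} and gives the desired bound. Since $\alpha$ is non-decreasing, any $m$ witnessing $h(m) \leq \alpha(k)$ also witnesses $h(m) \leq \alpha(k')$ whenever $k \leq k'$, so $\beta$ is non-decreasing. Since $\alpha \to \infty$, for each $M$ we have $\alpha(k) \geq h(M)$ for all sufficiently large $k$, whence $\beta(k) \geq M$ eventually; thus $\beta(k) \to \infty$. Finally, by the defining maximality $h(\beta(k)) \leq \alpha(k)$, i.e.\ $\beta(k)\cdot(n-1)^{\beta(k)} \leq \alpha(k)$. Applying \cref{Main Theorem} to this $\beta$ and combining the bound $\SF[\cM_\beta](n,k) \leq \beta(k)\cdot(n-1)^{\beta(k)}$ with the last inequality yields $\SF[\cM_{n,\alpha}](n,k) \leq \alpha(k)$, while part (a) of \cref{Main Theorem} supplies the remaining properties. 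The only place the hypotheses genuinely enter is in verifying that $\beta$ is non-decreasing, unbounded, and has $\beta(0) \geq 3$ — the quantitative assumption $\alpha(0) \geq 3\cdot(n-1)^3$ being exactly what secures the last of these — so I anticipate no real obstacle beyond this bookkeeping.
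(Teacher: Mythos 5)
Your proof is correct and is exactly the argument the paper intends: the paper leaves the corollary as "immediate from Theorem 2.4," and your rescaling — choosing $\beta(k)=\max\{m\geq 3 \st m(n-1)^m\leq\alpha(k)\}$, verifying it is non-decreasing, unbounded, with $\beta(0)\geq 3$, and then invoking the theorem's bound $\beta(k)\cdot(n-1)^{\beta(k)}\leq\alpha(k)$ — is precisely the bookkeeping that makes that implication explicit, with the hypothesis $\alpha(0)\geq 3\cdot(n-1)^3$ used exactly where it should be.
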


We have considered the growth rate of the function $\SF[\cM](n, k)$ where when we fix $n$ or $k$. We end with two conjectures about this function when neither coordinate is fixed. 

\begin{conjecture}
Suppose $f\:\w\to \w$ is a non-decreasing function which go to infinity. Then there is a constant $c_f \in \w$ such that whenever $\alpha\:\w \to \w$ is a non-decreasing function which goes to infinity and with $\alpha(0) \geq c_f$ there is a finite functional language $\Lang$ and a locally finite $\Lang$-structure $\cM_f$ such that
\[
(\forall k \in \w)\, \SF[\cM_f](f(k), k) \leq \alpha(k).
\]
\end{conjecture}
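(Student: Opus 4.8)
The plan is to realize $\cM_f$ as one of the structures $\cN_\beta$ constructed in the proof of \cref{Main Theorem}, with the increasing function $\beta$ chosen to grow very fast. Two facts are extracted from that proof. First, the estimate obtained there from \cref{Sunflower lemma}, namely
\[
\SF[\cN_\beta](n,k)\ \le\ \bigl(\gamma_\beta^\circ(k)\bigr)!\;\bigl(n-1\bigr)^{\gamma_\beta^\circ(k)!},
\]
holds for \emph{all} $n$ and all $k$ with $\gamma_\beta^\circ(k)\ge 3$ — it is just the Sunflower Lemma applied, via the bijection $b$, to the corresponding family of finite sets — not merely for one fixed $n$; and $\gamma_\beta^\circ(k)\ge 3$ for every $k$ outside a fixed finite initial segment. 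Second, $\gamma_\beta^\circ$ is non-decreasing, and since $\gamma_\beta(m)\ge m!\,\beta(m)$ grows at least as fast as $\beta$, which is ours to choose, $\gamma_\beta^\circ$ can be forced to stay below any prescribed non-decreasing unbounded function $g$: it suffices to take $\beta$ increasing with $\beta(m)\ge g^\circ(m)$.

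Substituting $n=f(k)$, it is thus enough to produce a non-decreasing unbounded $g$ with $\bigl(g(k)\bigr)!\,\bigl(f(k)-1\bigr)^{g(k)!}\le\alpha(k)$ for every $k$; for then, with $\beta$ as above and using that $j\mapsto j!\,(f(k)-1)^{j!}$ is non-decreasing (once $f(k)\ge 2$), one reads off $\SF[\cM_f](f(k),k)\le\alpha(k)$ for all $k$ outside the finite initial segment. The natural candidate is
\[
h(k)\ \defas\ \max\bigl\{\,j\in\w \st j!\,(f(k)-1)^{j!}\le\alpha(k)\,\bigr\},\qquad g(k)\ \defas\ \min_{k'\ge k}h(k');
\]
this $g$ is non-decreasing, satisfies $g\le h$ and hence the required inequality, and is unbounded precisely when $h(k)\to\infty$.

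The one genuine obstacle is that $h(k)\to\infty$ — equivalently, that for each fixed $j$ the function $\alpha$ eventually dominates $k\mapsto j!\,(f(k)-1)^{j!}$, i.e.\ that $\alpha$ outgrows every fixed power of $f$ — and this is \emph{not} a consequence of the hypothesis ``$\alpha(0)\ge c_f$''; indeed the conjecture as literally stated is false. Since a family of fewer than $n$ substructures contains no sunflower of size $n$, one has $\SF[\cM](n,k)\ge n$ for every $\cM$, hence $\SF[\cM_f](f(k),k)\ge f(k)$; so $\alpha(k)\ge f(k)$ is forced for all $k$, which already fails for $f(k)=2^k$: whatever constant $c_f$ is proposed, $\alpha(k)=\max(c_f,k)$ is non-decreasing, tends to infinity, has $\alpha(0)=c_f$, yet satisfies $\alpha(k)<f(k)\le\SF[\cM_f](f(k),k)$ for all large $k$, for every $\cM_f$. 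Worse, for a structure of the form $\cN_\beta$ the values $\gamma_\beta(m)$ form an unbounded increasing sequence of integers, so $\gamma_\beta^\circ(k)\to\infty$, and the classical lower bound $\SF(n,m)>(n-1)^m$ — witnessed by the graphs of the $(n-1)^m$ functions from an $m$-element set to an $(n-1)$-element set — forces $\alpha$ to dominate \emph{every} power of $f$, not merely $f$ itself. The correct statement should therefore retain the conjecture verbatim except that ``$\alpha(0)\ge c_f$'' is replaced by the genuine rate hypothesis: for every $j\in\w$, $\alpha(k)\ge j!\,(f(k)-1)^{j!}$ for all sufficiently large $k$. Note that when $f$ is constant this reduces, since $\alpha$ is unbounded, to a constraint on $\alpha(0)$ alone, which is exactly the shape of the hypothesis in the Corollary.

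Granting that correction, the argument closes: the rate hypothesis makes $h$, hence $g$, unbounded; one picks $g$ as above, then an increasing $\beta$ with $\gamma_\beta^\circ\le g$ pointwise, sets $\cM_f\defas\cN_\beta$, and reads off $\SF[\cM_f](f(k),k)\le\alpha(k)$ for every $k$ outside the finite initial segment, and $\cM_f$ is ultrahomogeneous, has $\SAP$, and is totally categorical directly by \cref{Main Theorem}. The conclusion on that finite segment — where $\SubStr[k](\cM_f)$ may carry only the trivial substructure of size $\le k$ — is a matter of bookkeeping: assert it only for $k$ large, or adjoin enough constants to $\Lang$ that $\SubStr[k](\cM_f)$ is already infinite there, or assume $f\le 1$ there. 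The hard part, to restate it, is conceptual rather than computational: recognizing that the trivial lower bound $\SF[\cM_f](f(k),k)\ge f(k)$ makes some growth hypothesis on $\alpha$ relative to $f$ unavoidable, and pinning down the precise such hypothesis under which the $\cN_\beta$ machinery suffices.
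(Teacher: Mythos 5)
You should be aware at the outset that this statement is one of the two conjectures with which the paper closes: the authors offer no proof of it, so there is no argument in the paper to compare yours against. What you have produced is accordingly not a proof of the statement but a refutation of its literal form together with a repaired statement and a proof of the repair, and on its own terms this is essentially correct. Your refutation is sound: under the paper's stated conventions ($\SF[\cM](n,k)=\infty$ when no sunflower of size $n$ exists among substructures of size at most $k$, and in general $\SF[\cM](n,k)\ge n$), any witnessing $\cM_f$ forces $\alpha(k)\ge f(k)$ for every $k$; since $f$ is unbounded while the hypotheses constrain only $\alpha(0)$, an admissible $\alpha$ can be held at the value $c_f$ long after $f$ has exceeded it, so no constant $c_f$ can work --- and this applies to every unbounded $f$, not just $f(k)=2^k$. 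The hypothesis ``$\alpha(0)\ge c_f$'' is an artifact of the fixed-$n$ Corollary, where the relevant thresholds really are constants; once $n=f(k)$ varies, some hypothesis tying the growth of $\alpha$ to that of $f$ is unavoidable, exactly as you say.

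Your repaired statement is then proved by re-running the proof of \cref{Main Theorem} with $n=f(k)$ varying, which is legitimate: the \Erdos--Rado bound transferred through the map $b$ holds for all $n$ and all sufficiently large substructure sizes simultaneously, and the only new step is choosing $\beta$ so that $\gamma_\beta^\circ$ stays below your $g$. Two caveats. First, bookkeeping: ``$\beta(m)\ge g^\circ(m)$'' is not quite the right condition --- what you need is $\gamma_\beta(g(k))\ge k$ for all $k$, so rather $\beta(m)\ge g^\circ(m+1)$ or similar --- and the finitely many small $k$ for which $\SubStr[k](\cM_f)$ is empty (where the paper's convention makes $\SF[\cM_f](f(k),k)=\infty$) do require the sort of patch you mention; the same edge case already affects clause (e) of \cref{Main Theorem} as stated, so this is not a defect specific to your argument. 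Second, and more substantively, your replacement hypothesis (that $\alpha$ eventually dominates $j!\,(f-1)^{j!}$ for every $j$) is what the $\cN_\beta$ machinery requires, not what the trivial lower bound forces; for general structures only $\SF[\cM](f(k),k)\ge f(k)$ is known. So your corrected statement is a true theorem, but it quietly drains the conjecture of most of its content: the interesting question, presumably the intended one, is whether algebraic structure allows a hypothesis much closer to $\alpha\ge f$, i.e.\ whether one can beat the transversal lower bound $(n-1)^m$ that, as you correctly note, blocks every structure of the form $\cN_\beta$. That question remains open, and your argument, by design, cannot address it.
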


\begin{conjecture}
Suppose $g\:\w\to \w$ is a non-decreasing function which goes to infinity. There is a constant $d_g \in \w$ such that whenever $\alpha\:\w \to \w$ is a non-decreasing function which goes to infinity and with $\alpha(0) \geq d_g$ there is a finite functional language $\Lang$ and a locally finite $\Lang$-structure $\cN_g$ such that
\[
(\forall n \in \w)\, \SF[\cN_g](n, g(n)) \leq \alpha(n).
\]
\end{conjecture}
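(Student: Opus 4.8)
The plan is to run the construction of \cref{Main Theorem} along the diagonal $k = g(n)$; as we will see, the statement as literally worded seems to require a mild strengthening of the hypothesis on $\alpha$. Recall that for an increasing $\beta\:\w\to\w$ the structure $\cN_\beta$ has finitely generated substructures in canonical bijection with the finite subsets of its atom set $b(\cN_\beta)$, that a substructure with $m$ atoms has size exactly $\gamma_\beta(m)$, and that — by properties (b)--(d) together with the fact that every finite atom set is realized — a finite family $\cF \subseteq \SubStr[k](\cN_\beta)$ contains a sunflower of size $n$ exactly when the associated family of atom sets does. Since a substructure of size at most $k$ has at most $\gamma_\beta^\circ(k)$ atoms, this yields
\[
\SF[\cN_\beta](n, k)\;\leq\;\SF\bigl(n,\gamma_\beta^\circ(k)\bigr)\;\leq\;\bigl(\gamma_\beta^\circ(k)\bigr)!\,(n-1)^{\gamma_\beta^\circ(k)},
\]
the last inequality being the \Erdos--Rado Sunflower Lemma. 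Substituting $k = g(n)$, it remains to choose $\beta$ (depending on $g$ and $\alpha$) so that the right-hand side is at most $\alpha(n)$ for all $n$, and then to set $\cN_g := \cN_\beta$; properties (a)--(d) for $\cN_g$ are inherited verbatim from \cref{Main Theorem}.

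Concretely, for $n \geq 2$ put $\mu(n) := \max\{\,m:\ m!\,(n-1)^m \leq \alpha(n)\,\}$, a well-defined finite value, so that $\mu(n)!\,(n-1)^{\mu(n)} \leq \alpha(n)$ by construction; suppose for the moment that $\mu(n) \to \infty$, so that $M_m := \max\{\,n:\mu(n) \leq m\,\}$ is finite for each $m$. Build $\beta$ recursively, taking $\beta(m) > \beta(m-1)$ large enough that $\gamma_\beta(m) \geq g(M_m)$ — possible because $\gamma_\beta(m) \geq m!\,\beta(m)$ and $g(M_m)$ is a fixed number. Then $\gamma_\beta^\circ(g(n)) \leq \mu(n)$ for every $n$: if $\mu(n) = m$ then $n \leq M_m$, hence $g(n) \leq g(M_m) \leq \gamma_\beta(m)$. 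Monotonicity of $\SF(n,\pars)$ in its second argument now gives $\SF[\cN_g](n, g(n)) \leq \SF(n,\mu(n)) \leq \mu(n)!\,(n-1)^{\mu(n)} \leq \alpha(n)$, which is the conclusion, with $d_g$ chosen so that the base step $m = 0$ of the recursion runs.

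The main obstacle — and presumably why this is stated only as a conjecture — is the assumption $\mu(n) \to \infty$, which cannot be dropped and which fails for slowly growing $\alpha$. The universal lower bound $\SF[\cM](n, g(n)) \geq n$ already forces $\alpha(n) \geq n$; worse, since $g \to \infty$ the diagonal atom count $\gamma_\beta^\circ(g(n))$ tends to infinity for every choice of $\beta$, and a standard product construction gives $\SF(n, j) \geq (n-1)^j$, so $\SF[\cN_\beta](n, g(n))$ outgrows every fixed polynomial in $n$. Thus $\SF[\cN_g](n, g(n)) \leq \alpha(n)$ is already false for, e.g., polynomially bounded $\alpha$, and the statement as written should carry the extra hypothesis that $\alpha$ dominates every polynomial (equivalently $\log \alpha(n)/\log n \to \infty$) — precisely what makes the $\mu$ above unbounded, and in the spirit of the bound $\alpha(0) \geq 3\cdot(n-1)^3$ in the corollary above. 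Under that strengthened hypothesis the plan closes and the remainder (the recursion, the $\mu$-bookkeeping, and the choice of $d_g$) is routine; the genuinely open question is whether a construction essentially different from $\cN_\beta$ could beat $\SF(n,\pars)$ along the diagonal and so permit a weaker hypothesis on $\alpha$, as far down as the trivial barrier $\alpha(n) \geq n$.
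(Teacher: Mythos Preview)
The statement you are addressing is recorded in the paper as an open \emph{conjecture}; the paper gives no proof, so there is nothing to compare your argument against. What you have produced is not a proof of the conjecture and you do not claim it is: it is a correct analysis of how far the $\cN_\beta$ construction of \cref{Main Theorem} can be pushed along the diagonal $k=g(n)$. Your reduction $\SF[\cN_\beta](n,k)\le\SF\bigl(n,\gamma_\beta^\circ(k)\bigr)$, the definition of $\mu$, and the recursive choice of $\beta$ are all sound, and under the auxiliary hypothesis $\mu(n)\to\infty$ (equivalently, $\alpha$ eventually dominates every polynomial in $n$) the argument closes.

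Your diagnosis of the obstruction is also correct and is presumably exactly why the authors left this open. The trivial bound $\SF[\cM](n,g(n))\ge n$ already shows the hypothesis ``$\alpha(0)\ge d_g$'' cannot suffice as written, and for the specific family $\cN_\beta$ the classical lower bound $\SF(n,j)>(n-1)^j$ together with $\gamma_\beta^\circ(g(n))\to\infty$ forces $\SF[\cN_\beta](n,g(n))$ to outrun every polynomial in $n$, regardless of $\beta$. So the $\cN_\beta$ family cannot witness the conjecture for sub-super-polynomial $\alpha$, and --- as you say --- whether a genuinely different construction can do better, down toward the barrier $\alpha(n)\ge n$, is the substantive open problem. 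Your write-up would stand as a legitimate partial result (a proof under a strengthened hypothesis on $\alpha$), but it should be labeled as such rather than as a proof of the conjecture.
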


\bibliographystyle{amsnomr}
\bibliography{bibliography}

\providecommand{\bysame}{\leavevmode\hbox to3em{\hrulefill}\thinspace}
\providecommand{\MR}{\relax\ifhmode\unskip\space\fi MR }
\providecommand{\MRhref}[2]{%
  \href{http://www.ams.org/mathscinet-getitem?mr=#1}{#2}
}
\providecommand{\href}[2]{#2}
\begin{thebibliography}{ALWZ21}

\bibitem[AGM23]{Cohen-Generic-with-Functions_AGM}
N.~Ackerman, M.~Golshani, and M.~Mirabi, \emph{Cohen generic structures with
  functions}, arXiv e-prints (2023).

\bibitem[ALWZ21]{MR4334977}
R.~Alweiss, S.~Lovett, K.~Wu, and J.~Zhang, \emph{Improved bounds for the
  sunflower lemma}, Ann. of Math. (2) \textbf{194} (2021), no.~3, 795--815.

\bibitem[ER60]{Erdos-Rado}
P.~Erd\H{o}s and R.~Rado, \emph{Intersection theorems for systems of sets},
  Journal of the London Mathematical Society \textbf{35} (1960), no.~1, 85--90.

\bibitem[FPS23]{MR4622593}
J.~Fox, J.~Pach, and A.~Suk, \emph{Sunflowers in set systems of bounded
  dimension}, Combinatorica \textbf{43} (2023), no.~1, 187--202.

\bibitem[Gol19]{Golshani}
M.~Golshani, \emph{Fraïssé limit via forcing}, arXiv e-prints (2019).

\bibitem[Kos23]{Cohen}
Z.~Kostana, \emph{Cohen-like first order structures}, Ann. Pure Appl. Logic
  \textbf{174} (2023), no.~1, Paper No. 103172, 17.

\end{thebibliography}

\end{document}